\definecolor{aleacolor}{rgb}{0.16,0.59,0.78}
\renewcommand{\cite}{\citet}
\theoremstyle{plain}
\newtheorem{theorem}{Theorem}[section]                                          
\newtheorem{lemma}[theorem]{Lemma}
\newtheorem{corollary}[theorem]{Corollary}
\theoremstyle{definition}
\newtheorem{definition}[theorem]{Definition}
\theoremstyle{remark}
\newtheorem{remark}[theorem]{Remark}
\newtheorem{example}[theorem]{Example}
\makeatletter \@addtoreset{equation}{section} \makeatother
\newcommand{\aleaIndex}[1]{\href{http://alea.impa.br/english/index_v#1.htm}{\bf #1}}
\newcommand{\smallx}{\mathpzc{x}}
\newcommand{\smally}{\mathpzc{y}}
\DeclareMathAlphabet{\mathpzc}{OT1}{pzc}{m}{it}
\begin{document}

\title[Compact metric measure spaces]{Compact metric measure spaces
  and $\Lambda$-coalescents coming down from infinity}

\author{Holger F. Biehler}
\author{Peter Pfaffelhuber}

\address{Albert-Ludwigs-Universit\"at Freiburg\newline Abteilung
  Mathematische Stochastik\newline Eckerstr.1,\newline D-79104
  Freiburg, Germany.}

\email{holger.biehler@web.de, p.p@stochastik.uni-freiburg.de}
\urladdr{\url{http://www.stochastik.uni-freiburg.de/~pfaffelh}}

\thanks{Research supported by the BMBF through FRISYS (Kennzeichen 0313921)}
\subjclass[2000]{60B05, 05C80.} 
\keywords{Metric measure spaces, Lambda-coalescent.}

\begin{abstract}
\noindent
We study topological properties of random metric spaces which arise by
$\Lambda$-coalescents. These are stochastic processes, which start
with an infinite number of lines and evolve through multiple mergers
in an exchangeable setting. We show that the resulting
$\Lambda$-coalescent measure tree is compact iff the
$\Lambda$-coalescent comes down from infinity, i.e.\ only consists of
finitely many lines at any positive time. If the $\Lambda$-coalescent
stays infinite, the resulting metric measure space is not even locally
compact.

Our results are based on general notions of compact and locally
compact (isometry classes of) metric measure spaces. In particular, we
give characterizations for general (random) metric measure spaces to
be (locally) compact using the Gromov-weak topology.
\end{abstract}

\maketitle

\section{Introduction}
Metric structures arise frequently in probability theory. Prominent
examples are random trees (e.g.\
\citealp{Ald1993,EvansOConnell1994,MR1714707,Berestycki01}), where the
distance between two points is given by the length of the shortest
path connecting the points. A class of random trees is given by
coalescent processes, where a subset of an infinite number of
\emph{lines} can merge and the distance of two leaves is proportional
to the coalescence time
(\citealp{Kingman1982a,Pit1999,Aldous1999,Schweinsberg2000,Evans2000}). The
complexity of this class of processes is properly described by the
concepts of $\Lambda$-coalescent, where any set of lines can merge to
a single line (a multiple collision, \citealp{Pit1999}) and
$\Xi$-coalescents, where any set of lines can merge to several lines
at the same time (a simultaneous multiple collision,
\citealp{Schweinsberg2000a}). The resulting metric space has so far
mostly been studied in the simplest case, where only binary mergers
are allowed, the Kingman-coalescent \citep{Kingman1982a, Evans2000}.

Analyzing metric structures requires geometrical and topological
foundations. In the context of Riemannian geometry, such foundations
have already been laid by Gromov, summarized in his book
(\citealp{Gromov}, see also \citealp{Ver1998,BurBurIva01}). These
authors study convergence of (isometry classes of) compact metric
spaces by the notion of Gromov-Hausdorff convergence. In addition,
Gromov introduced a topology on the space of (isometry classes of)
metric measure spaces (mm-spaces, for short), which are metric spaces
equipped with a measure. We will call this the Gromov-weak topology in
the sequel (see also \citealp{GPWmetric09}).

In probability theory, results on weak convergence and stochastic
process theory require that the underlying space is Polish. In
addition, a characterization of the compact sets is required in order
to show tightness. These concepts have been worked out based on
Gromov's notions by \cite{EvaPitWin2006} and \cite{GPWmetric09}.

~

The goal of the present paper is as follows: we concentrate on the
spaces of \emph{locally compact} and \emph{compact} mm-spaces and give
a characterization of these (see Theorems~\ref{T:compact} and
\ref{T:loccompact}). In addition, we apply these general results to
random mm-spaces ($\Lambda$-coalescent measure trees) which arise in
connection to $\Lambda$-coalescents. Recall that $\Lambda$-coalescents
fall into one of two categories, depending on $\Lambda$. Either a
$\Lambda$-coalescent comes down from infinity, meaning that it can be
started with an infinite number of lines and only finitely many are
left at any positive time, or it stays infinite for all times (see
\citealp[Proposition 23]{Pit1999}). The proof of the following result
is given in Section~\ref{S:4}.

\begin{theorem}[Coming down from infinity and compactness]\label{T1}
  Let $\Lambda$ be a finite measure on $[0,1]$ and $(\Pi_t)_{t\geq 0}$
  the corresponding $\Lambda$-coalescent. Moreover, $\mathcal L$ is
  the associated $\Lambda$-coalescent measure tree, taking values in
  the space of mm-spaces. Then the following is equivalent.
  \begin{enumerate}
  \item $(\Pi_t)_{t\geq 0}$ comes down from infinity, i.e.\
    $\#\Pi_t<\infty$ almost surely, for all $t>0$.
  \item $\mathcal L$ is compact, almost surely.
  \end{enumerate}
  If (1) (or 2) does not hold, $\mathcal L$ is not even locally
  compact.
\end{theorem}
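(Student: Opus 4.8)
The plan is to read both implications off the block-counting process $(\#\Pi_t)_{t\ge0}$, using the characterisations in Theorems~\ref{T:compact} and~\ref{T:loccompact}. Throughout I use the representation of $\mathcal L$ recalled above: it is (the isometry class of) the mm-space whose metric, restricted to leaves, is the ultrametric $d(i,j)=2t_{ij}$ with $t_{ij}=\inf\{t\ge0:\ i\sim_{\Pi_t}j\}$ the pairwise coalescence time, so that two independently $\mu$-sampled leaves have distance $2t_{ij}$, and $\mu$ is the mass measure built from the block frequencies. Since $t\mapsto\#\Pi_t$ is non-increasing, (1) is equivalent to $\#\Pi_t<\infty$ for all $t>0$ a.s., while $\neg$(1) is, by the dichotomy of \cite[Prop.~23]{Pit1999}, equivalent to $\#\Pi_t=\infty$ for all $t>0$ a.s. To prove (1)$\Rightarrow$(2) one fixes a realisation with $\#\Pi_t<\infty$ for all $t>0$ and lets $\varepsilon>0$; by ultrametricity the (finitely many) blocks of $\Pi_{\varepsilon/2}$ partition $\mathbb N$ into sets of $d$-diameter $\le\varepsilon$, so their closures cover $\operatorname{supp}\mu$ by finitely many sets of diameter $\le\varepsilon$. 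Hence $\operatorname{supp}\mu$ is totally bounded, being closed in a complete space it is compact, and Theorem~\ref{T:compact} gives that $\mathcal L$ is compact a.s. This direction is routine.

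For the converse it suffices to show that $\neg$(1) forces $\mathcal L$ to be a.s.\ \emph{not} locally compact, which also yields $\neg$(2) since compactness implies local compactness. By Theorem~\ref{T:loccompact} it is enough to produce, a.s., a point $x_\ast\in\operatorname{supp}\mu$ no neighbourhood of which is compact; since a closed subset of the complete space $\operatorname{supp}\mu$ is compact iff totally bounded and every neighbourhood of $x_\ast$ contains a ball $\bar B(x_\ast,r)$, it is enough that for each $r>0$ the set $\bar B(x_\ast,r)\cap\operatorname{supp}\mu$ contain an infinite $\rho(r)$-separated subset for some $\rho(r)>0$. The driving fact is the jump structure of the $\Lambda$-coalescent: at a coalescence event with fraction parameter $x>0$, when infinitely many blocks are present a positive fraction --- hence infinitely many --- of them merge at once. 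So if the lineage of a block $B$ of $\Pi_u$ coalesces at least once during $(s,u]$, then $B$ meets infinitely many blocks of $\Pi_s$, and selecting one leaf per such block exhibits an infinite $2s$-separated set of leaves inside $\bar B(y,2u)$ for every leaf $y\in B$. When $\Lambda$ has no dust, a fixed lineage coalesces at rate $\int_{(0,1]}x^{-1}\Lambda(dx)=\infty$ in the presence of infinitely many blocks, hence a.s.\ coalesces infinitely often in every interval $(0,\varepsilon)$, so every block of every $\Pi_u$ meets infinitely many blocks of $\Pi_s$ for all $0<s<u$. Since without dust every block has positive frequency (the proper-frequencies property, \cite{Pit1999}), each leaf lies in $\operatorname{supp}\mu$, as do the separated leaves just produced, and taking $x_\ast$ to be any leaf settles the dust-free case.

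When $\Lambda$ carries dust the situation is more delicate: lineages now coalesce at finite rate, individual leaves become isolated points carrying no $\mu$-mass, so $x_\ast$ must instead be sought among the limit points of $\mathbb N$ in $\operatorname{supp}\mu$, and one must verify that the ``infinitely-many-at-once'' phenomenon still forces, inside every ball about $x_\ast$, infinitely many \emph{positive-mass} sub-blocks --- not merely infinitely many blocks, most of which could be dust. Carrying this out, which relies on the fine structure of $\mathcal L$ in the dust regime (how much mass the dust carries and where it sits), is the step I expect to be the main obstacle. Once a suitable $x_\ast$ has been exhibited in either regime, Theorem~\ref{T:loccompact} gives that $\mathcal L$ is a.s.\ not locally compact --- in particular not compact --- establishing $\neg$(2) and the final assertion; combined with the first part this proves the equivalence.
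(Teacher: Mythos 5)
Your proposal follows the paper's route in everything that actually needs to be proved. For (1)$\Rightarrow$(2) the paper argues exactly as you do, phrased as $\xi_\varepsilon(\mathcal L)\le \#\Pi_\varepsilon<\infty$ together with Corollary~\ref{cor:compact}; for the converse it likewise proves the stronger statement that $\neg(1)$ rules out local compactness, using the same mechanism you describe: under $\neg(1)$ we have $\Lambda(\{0\})=0$, the line containing a fixed leaf participates in coalescence events at total rate $\int_0^1 x^{-1}\Lambda(dx)=\infty$, at each such event infinitely many of the infinitely many blocks merge at once, so every ball $B_\delta(1)$ contains an infinite $\eta$-separated set for all $0<\eta<\delta$, and Theorem~\ref{T:loccompact} (or, in your version, the definition applied to the representative $(\mathrm{supp}\,\mu,r)$) excludes local compactness. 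The paper additionally gives a self-contained proof of (2)$\Rightarrow$(1) via the modulus $\widetilde v_\delta$ (arbitrarily small-mass blocks force $\nu^{\mathcal L}\{s_\varepsilon\le\delta\}>0$ for every $\delta$, contradicting Corollary~\ref{cor:compact}), but your reduction ``not locally compact $\Rightarrow$ not compact'' is equally valid. The factor $2$ in your metric is only a rescaling and harmless.

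The one genuine discrepancy is your final paragraph: the ``dust case'' you flag as the main obstacle is not a case of the theorem. The $\Lambda$-coalescent measure tree $\mathcal L$ is defined as the Gromov-weak limit of $H^n(\Pi)$, and this limit exists if and only if the coalescent is dust-free, i.e.\ \eqref{eq:df} holds (Theorem~5 of \cite{GPWmetric09}, as recalled in Section~\ref{S:lambda}). So the hypothesis that $\mathcal L$ exists already places you in the dust-free regime, which is precisely what the paper invokes (``since the dust-free property \eqref{eq:df} holds by assumption'') to get the infinite coalescence rate of the line containing $1$ and to guarantee that all blocks at positive times carry positive mass, so that your separated leaves lie in $\mathrm{supp}\,\mu$. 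Deleting that worry, your proposal is essentially the paper's proof.
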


\noindent
We proceed as follows: In Section~\ref{S:mm} we develop our general
theory on compact and locally compact isometry classes of metric
measure spaces. Section~\ref{S:lambda} contains a short introduction
to $\Lambda$-coalescent measure trees. Finally, the proof of
Theorem~\ref{T1} is given in Section~\ref{S:4}. We remark that the
application of (locally) compact mm-spaces is not restricted to
trees. For example, it is possible to study large random planar maps,
as given in \cite{LeGall2007}, or random Graphs (e.g.\ the
Erd\H{o}s-Renyi random graph, \citealp{ABBG09}), by our notions.

\section{Metric measure spaces}
\label{S:mm}
We start with some notation. Our main results, the characterization of
compact and locally compact mm-spaces, is given in
Theorems~\ref{T:compact} and~\ref{T:loccompact}.

\begin{remark}[Notation]
  As usual, given a topological space $(X,\mathscr{O}_X)$, we denote
  by $\mathscr{M}_1(X)$ the space of all probability measures on the
  Borel-$\sigma$-algebra $\mathscr{B}(X)$. The \emph{support} of $\mu
  \in \mathscr{M}_1(X)$, supp$(\mu)$, is the smallest closed set $X_0
  \subseteq X$ such that $\mu(X \setminus X_0) = 0$. The
  \emph{push-forward} of $\mu$ under a measurable map $\varphi$ from
  $X$ into another topological space, $(Z,\mathscr O_Z)$, is the
  probability measure $\varphi_*\mu \in \mathscr{M}_1(Z)$ defined for
  all $A \in \mathscr{B}(Z)$ by $\varphi_*\mu (A) :=
  \mu(\varphi^{-1}(A)).$ We denote weak convergence in
  $\mathscr{M}_1(X)$ by $\xRightarrow{}$.
\end{remark}

\begin{definition}[Metric measure and mm-spaces]\mbox{}
  \begin{enumerate}
  \item A \emph{metric measure space} is a triple $(X,r,\mu)$ such
    that $X\subseteq \mathbb R$, $(X,r)$ is a complete and separable
    metric space which is equipped with a probability measure $\mu$ on
    $\mathscr B(X)$.  We say that $(X,r,\mu)$ and $(X',r',\mu')$ are
    \emph{measure-preserving isometric} if there exists an isometry
    $\varphi$ between supp$(\mu)\subseteq X$ and supp$(\mu')\subseteq
    X'$ such that $\mu'|_{\text{supp}(\mu')} =
    \varphi_*(\mu|_{\text{supp}(\mu)})$. It is clear that the property
    of being measure-preserving isometric is an equivalence relation.
  \item The equivalence class of the metric measure space $(X,r,\mu)$
    is called the mm-space of $(X,r,\mu)$ and is denoted
    $\overline{(X,r,\mu)}$. The set of mm-spaces is denoted $\mathbb
    M$ and generic elements are $\smallx, \smally,...$
  \item An mm-space $\smallx\in\mathbb M$ is \emph{(locally) compact}
    if there is $(X,r,\mu)\in\smallx$ such that $(X,r)$ is (locally)
    compact. The space of (locally) compact mm-spaces is denoted
    $\mathbb M_c$ ($\mathbb M_{lc}$).
  \end{enumerate}
\end{definition}

\noindent
Following \cite{GPWmetric09}, we equip $\mathbb{M}$ with the
Gromov-weak topology as follows.

\begin{definition}[Gromov-weak topology]
  For a metric space $(X,r)$ define
  \begin{align*}
    R^{(X,r)}: \begin{cases}
      X^\mathbb{N} &\to \mathbb{R}_+^{\binom{\mathbb{N}}{2}} \\
      (x_i)_{i \in \mathbb{N}} &\mapsto (r(x_i,x_j))_{1 \leq i <
        j} \end{cases}
  \end{align*}
  the map which sends a sequence of points in $X$ to its distance
  matrix and for an mm-space $\smallx = \overline{(X,r,\mu)}$ we
  define the \emph{distance matrix distribution} by
  \begin{align*}
    \nu^{\smallx} := (R^{(X,r)})_*\mu^{\otimes \mathbb{N}} \in
    \mathscr{M}_1(\mathbb{R}_+^{\binom{\mathbb{N}}{2}}),
  \end{align*}
  where $\mu^{\otimes \mathbb{N}}$ is the infinite product measure of
  $\mu$, where $\mathbb R_+^{\binom{\mathbb N} {2}}$ is equipped with
  the product $\sigma$-field. We say that a sequence
  $\smallx_1,\smallx_2,...\in\mathbb M$ converges \emph{Gromov-weakly}
  to $\smallx\in\mathbb M$ if
  $$ \nu^{\smallx_n} \xRightarrow{n\to\infty} \nu^\smallx.$$
\end{definition}

\noindent
Note that $\nu^\smallx$ does not depend on the representative
$(X,r,\mu)\in\smallx$, hence is well-defined. 

\begin{remark}[When is a random mm-space compact?]
  Recall from Theorem~1 of \cite{GPWmetric09} that the space $\mathbb
  M$, equipped with the Gromov-weak topology, is Polish. Hence,
  $\mathbb M$ allows to use standard tools from probability, e.g.\
  from the theory of weak convergence.

  In order to show that a random variable taking values in $\mathbb M$
  is supported by the space of locally compact or compact mm-spaces,
  there are two strategies, formulated here in the case of compact
  mm-spaces:

  Either, consider the Gromov-weak topology on $\mathbb M_c$.
  Defining an approximating sequence in $\mathbb M_c$ and showing that
  the sequence is tight in $\mathbb M_c$ ensures compactness of the
  limiting object. Note that any mm-space can be approximated by
  finite (hence compact) mm-spaces, so $\mathbb M_c$ is not closed in
  $\mathbb M$. So, this approach amounts to knowing the compact sets
  in $\mathbb M_c$. See Proposition~6.2 of
  \cite{GrevenPfaffelhuberWinter2010} for an example.

  Our application to the $\Lambda$-coalescent measure tree in
  Section~\ref{S:4} relies on a different approach. It is possible to
  give handy characterizations of compact mm-spaces; see
  Theorem~\ref{T:compact}. Hence, if we are given a random variable
  taking values in $\mathbb M$ through a sequence of mm-spaces, it is
  possible to check directly if the limiting object is compact.
\end{remark}

\begin{definition}[Distance distribution, Moduli of mass
  distribution]\mbox{}
  Let \label{def:mod1} $\smallx \in \mathbb{M}$. We set
  $\underline{\underline r}:=(r_{ij})_{1\leq i<j}\in\mathbb
  R_+^{\binom{\mathbb N}{2}}$.
  \begin{itemize}
  \item[(a)] Let $r: \mathbb R^{\binom{\mathbb N}{2}}_+\to\mathbb R_+$
    be given by $r\big( \underline{\underline r}\big) :=
    r_{12}$. Then, the \emph{distance distribution} is given by
    $w_{\smallx}:= r_*\nu^\smallx$, i.e.,
    \begin{align*}
      w_{\smallx}(\cdot) := \nu^\smallx\big\{\underline{\underline r}:
      r_{12}\in \cdot\big\}.
    \end{align*}
  \item[(b)] For $\varepsilon>0$, define $s_\varepsilon: \mathbb
    R^{\binom{\mathbb N}{2}}_+\to\mathbb R_+$ by
    $$ s_\varepsilon\big(\underline{\underline r}\big) := \lim_{n\to\infty} \frac 1n \sum_{j=1}^n 
    1_{\{r_{1j}\leq \varepsilon\}}$$ if the limit exists (and zero
    otherwise). Note that $s_\varepsilon(\underline{\underline r})$
    exists for $\nu^\smallx$-almost all $\underline{\underline r}$ by
    exchangeability and de Finetti's Theorem.  For $\delta>0$, the
    \emph{moduli of mass distribution} are
    \begin{align*}
      v_\delta (\smallx) := \inf\{\varepsilon > 0 : ~
      \nu^\smallx\big\{\underline{\underline r}:
      s_\varepsilon(\underline{\underline r})\leq \delta\big\}\leq \varepsilon\}
    \end{align*}
    and
    \begin{align*}
      \widetilde v_\delta (\smallx) := \inf\{\varepsilon > 0 : ~
      \nu^\smallx\big\{\underline{\underline r}:
      s_\varepsilon(\underline{\underline r})\leq \delta\big\}=0\}.
    \end{align*}
  \end{itemize}
\end{definition}

\begin{example}[Representatives of $\smallx$]
  \label{rem:repres} Let $\smallx = \overline{(X,r,\mu)}$. Without
  loss of generality we assume that supp$(\mu)=X$. Since $\nu^\smallx
  = (R^{(X,r)})_\ast \mu^{\otimes\mathbb N}$, we have that
  $$ w_\smallx(\cdot) = \mu^{\otimes 2}\{(x,y): r(x,y)\in \cdot\}.$$
  Moreover, 
  \begin{align}\label{eq:repres1}
    \nu^\smallx\{\underline{\underline r}:
    s_\varepsilon(\underline{\underline r}) \in \cdot\} = \mu\{x:
    \mu(B_\varepsilon(x)) \in\cdot\}
  \end{align}
  by construction, where $B_\varepsilon(x)$ is the closed ball of
  radius $\varepsilon$ around $x$. This implies that
  $$ v_\delta(\smallx) \leq \varepsilon \qquad \iff \qquad \mu\{x: \mu(B_\varepsilon(x)) 
  \leq \delta\} \leq \varepsilon.$$ In particular,
  $v_\delta(\smallx)\leq \varepsilon$ means, that thin points (in the
  sense that $\mu(B_\varepsilon(x))\leq \delta$) are rare (i.e.\ carry
  mass at most $\varepsilon$). Moreover, 
  $$ \widetilde v_\delta(\smallx) \leq \varepsilon \qquad \iff \qquad \mu\{x: \mu(B_\varepsilon(x)) 
  \leq \delta\} = 0.$$ This means that there are $\mu$-almost surely
  no points which are too thin (in the sense that
  $\mu(B_\varepsilon(x))\leq \delta$). 
\end{example}

\begin{definition}[Size of $\varepsilon$-separated set]
  Let $\underline{\underline r}\in\mathbb R_+^{\binom{\mathbb
      N}{2}}$. For $\varepsilon>0$, define the \emph{maximal size of
    an $\varepsilon$-separated set} by
  $$\xi_\varepsilon(\underline{\underline r}) := \sup\big\{N\in\mathbb N: \;\exists 
  k_1<...<k_N: (r_{k_i,k_j})_{1\leq i<j\leq N} \in
  (\varepsilon,\infty)^{\binom{N}{2}}\big\}.$$
\end{definition}

\begin{lemma}[$\xi_\varepsilon$ is constant, $\nu^\smallx$-almost
  surely]\label{l4} Let $\smallx\in\mathbb N$ and
  $\varepsilon>0$. Then, $\xi_\varepsilon$ is constant,
  $\nu^\smallx$-almost surely and equals
  $$ \xi_\varepsilon(\smallx):=\inf\big\{N\in\mathbb N: \nu^\smallx
  \big( \rho_N^{-1} \big((\varepsilon,\infty)^{\binom{N}{2}}\big)
  >0\big\},$$ where $\rho_N: \mathbb R^{\binom{\mathbb N}{2}} \to
  \mathbb R^{\binom{N}{2}}$ is the projection on the first $\binom N
  2$ coordinates.
\end{lemma}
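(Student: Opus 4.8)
The plan is to reduce the statement to a zero-one law. Fix a representative $(X,r,\mu)\in\smallx$ and let $X_1,X_2,\dots$ be i.i.d.\ with law $\mu$; by the very definition of $\nu^\smallx$ the random distance matrix $\underline{\underline r}:=(r(X_i,X_j))_{1\le i<j}$ has law $\nu^\smallx$, and $\xi_\varepsilon(\underline{\underline r})$ is then the largest $N$ for which some $N$ of the points $X_1,X_2,\dots$ are pairwise more than $\varepsilon$ apart. For $N\in\mathbb N$ set
\begin{align*}
  p_N &:= \nu^\smallx\big(\rho_N^{-1}((\varepsilon,\infty)^{\binom{N}{2}})\big)\\
  &= \mu^{\otimes N}\big\{(x_1,\dots,x_N):\ r(x_i,x_j)>\varepsilon \text{ for all } i<j\big\}.
\end{align*}
Every $\varepsilon$-separated $N$-tuple contains an $\varepsilon$-separated $(N-1)$-tuple, so $(p_N)_{N\ge1}$ is non-increasing; moreover $p_1=1$, the constraint being vacuous for $N=1$. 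Hence $c:=\sup\{N\in\mathbb N:\ p_N>0\}$ is a well-defined element of $\mathbb N\cup\{\infty\}$, and I claim that $\xi_\varepsilon=c$ holds $\nu^\smallx$-almost surely.

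The claim rests on two observations. \emph{First}, if $p_N>0$ then $\xi_\varepsilon\ge N$ almost surely: partition $\mathbb N$ into the consecutive blocks $B_m=\{(m-1)N+1,\dots,mN\}$, $m\ge1$; the events ``$(X_i)_{i\in B_m}$ is $\varepsilon$-separated'' are independent, each of probability $p_N>0$, so by the second Borel--Cantelli lemma infinitely many of them occur almost surely, and the occurrence of even one of them exhibits $N$ points that are pairwise more than $\varepsilon$ apart, whence $\xi_\varepsilon\ge N$. \emph{Second}, if $p_N=0$ then $\xi_\varepsilon<N$ almost surely: by exchangeability the probability that $X_{k_1},\dots,X_{k_N}$ are $\varepsilon$-separated equals $p_N=0$ for every $k_1<\dots<k_N$, so countable subadditivity yields
$$ \nu^\smallx\big\{\xi_\varepsilon\ge N\big\}\ \le\ \sum_{k_1<\dots<k_N} p_N\ =\ 0. $$

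Combining the two observations proves the claim: the first, applied to all $N\le c$, gives $\xi_\varepsilon\ge c$ almost surely; when $c<\infty$, the second with $N=c+1$ gives $\xi_\varepsilon\le c$ almost surely; and when $c=\infty$, the first applied to every $N$ forces $\xi_\varepsilon=\infty$ almost surely. So $\xi_\varepsilon$ equals the constant $c$, $\nu^\smallx$-almost surely. By the definition of $p_N$ this constant is precisely the quantity $\xi_\varepsilon(\smallx)$ of the statement, namely $c=\sup\{N\in\mathbb N:\ \nu^\smallx(\rho_N^{-1}((\varepsilon,\infty)^{\binom{N}{2}}))>0\}$; and since $\nu^\smallx$ is determined by $\smallx$, so is $c$.

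I expect the first observation to be the only genuine obstacle: positivity of $p_N$ merely says that one fixed block of $N$ coordinates is $\varepsilon$-separated with positive probability, and this has to be upgraded to the almost-sure presence of such a block somewhere in the infinite sample --- which is exactly what the block decomposition together with the second Borel--Cantelli lemma achieves, the blocks being i.i.d.\ copies of $(X_1,\dots,X_N)$. (Alternatively, one may observe that $\{\xi_\varepsilon\ge N\}$ is invariant under finite permutations of $(X_i)_{i\ge1}$, so the Hewitt--Savage zero-one law already makes $\xi_\varepsilon$ almost surely constant, and the second observation then identifies the value.)
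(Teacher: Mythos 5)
Your proof is correct, but it follows a genuinely different route from the paper's. The paper works with a representative $(X,r,\mu)$ and argues geometrically: for $\mu^{\otimes\mathbb N}$-almost every sample $(x_1,x_2,\dots)$ the sequence is dense in $\mathrm{supp}(\mu)$, and since $\varepsilon$-separation is a strict inequality it survives small perturbations, so the maximal size of an $\varepsilon$-separated subset of the sampled points coincides a.s.\ with the maximal size of an $\varepsilon$-separated subset of $\mathrm{supp}(\mu)$ --- a deterministic quantity --- which gives constancy and identifies the constant in one stroke (and it is exactly this geometric reading of $\xi_\varepsilon(\smallx)$ that is reused in the proof of Theorem~\ref{T:compact}). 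You instead stay entirely on the level of the exchangeable law $\nu^\smallx$: monotonicity of $p_N$, the disjoint-block/Borel--Cantelli (or Hewitt--Savage) step to upgrade $p_N>0$ to $\xi_\varepsilon\ge N$ a.s., and a countable union bound when $p_N=0$. What your version buys is a self-contained zero--one argument that needs neither the a.s.\ density of an i.i.d.\ sample in the support nor the perturbation-of-strict-inequalities observation; what it gives up is brevity and the direct geometric identification of the constant that the paper exploits later. One detail worth flagging: the displayed formula in the lemma uses an infimum, but since $p_1=1$ and $(p_N)_N$ is non-increasing, $\inf\{N:\nu^\smallx(\rho_N^{-1}((\varepsilon,\infty)^{\binom{N}{2}}))>0\}$ would always equal $1$; the intended quantity is your $c=\sup\{N:\nu^\smallx(\rho_N^{-1}((\varepsilon,\infty)^{\binom{N}{2}}))>0\}$, so your silent replacement of the infimum by a supremum is the correct reading of a typo in the statement rather than a discrepancy in your proof.
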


\begin{proof}
  Assume $\smallx = \overline{(X,r,\mu)}$. Let $x_1,x_2,...\in X$ be
  such that $\xi_\varepsilon((r(x_i, x_j))_{1\leq i<j}) = N$. Then,
  $N$ is the maximal size of an $\varepsilon$-separated set in
  $(X,r)$, $\mu^{\otimes \mathbb N}$-almost surely. All results
  follow, since $\nu^\smallx = (R^{(X,r)})_\ast \mu^{\otimes\mathbb
    N}$ and since $\nu^\smallx$ is exchangeable.
\end{proof}

\begin{remark}[Tightness in $\mathbb M$]
  Recall from Theorem~2 in \cite{GPWmetric09} that for any
  $\smallx\in\mathbb M$, it holds that $v_\delta(\smallx)
  \xrightarrow{\delta\to 0}0$. Moreover, a set $\Gamma\subseteq
  \mathbb M$ is pre-compact iff $\{w_\smallx: \smallx\in \Gamma\}$ is
  tight (as a family in $\mathscr M_1(\mathbb R_+)$) and
  $\sup_{\smallx\in\Gamma}v_\delta(\smallx) \xrightarrow{\delta\to
    0}0$.

  \sloppy This leads to a characterization of tightness for a family
  of random mm-spaces, see \cite{GPWmetric09}, Theorem 3: Here, (the
  distributions of) a family $\{\mathcal X: \mathcal X\in\Gamma\}$ of
  $\mathbb M$-valued random variables is tight iff $\{\langle
  w_{\mathcal X}\rangle: \mathcal X\in\Gamma\}$ is tight (where
  $\langle w_{\mathcal X}\rangle$ is the first moment measure of
  $(w_{\mathcal X})_\ast \mathbf P \in \mathscr M_1(\mathscr
  M_1(\mathbb R_+))$ and $\sup_{\mathcal X\in\Gamma} \mathbf
  E[v_\delta(\mathcal X)] \xrightarrow{\delta\to 0} 0.$ Given a
  sequence of random mm-spaces, we can use these results in order to
  obtain limiting objects, at least along subsequences.
\end{remark}

\noindent
Now we come to a characterization of compact mm-spaces. 

\begin{theorem}[Compact mm-spaces] \label{T:compact} Let $\smallx
  \in\mathbb M$. The following conditions are equivalent.
  \begin{enumerate}
  \item The mm-space $\smallx$ is compact, i.e.\ $\smallx\in\mathbb
    M_c$.
  \item For all $\varepsilon>0$, it holds that $\xi_\varepsilon(\smallx)<\infty$.
  \item For all $\varepsilon>0$, there is $\delta > 0$ such that
    $\widetilde v_\delta(\smallx)\leq \varepsilon$.
  \end{enumerate}
\end{theorem}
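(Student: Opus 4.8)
The plan is to fix, without loss of generality, a representative $(X,r,\mu)\in\smallx$ with $X=\mathrm{supp}(\mu)$, and to translate the three conditions into statements about the complete separable metric space $(X,r)$ and the measure $\mu$. Two facts established above are used throughout: by Lemma~\ref{l4}, $\xi_\varepsilon(\smallx)$ is exactly the maximal size of an $\varepsilon$-separated subset of $X$; and by Example~\ref{rem:repres}, $\widetilde v_\delta(\smallx)\le\varepsilon$ holds iff $\mu\{x:\mu(B_\varepsilon(x))\le\delta\}=0$. I would then run the cycle $(1)\Rightarrow(2)\Rightarrow(3)\Rightarrow(1)$, or equivalently prove $(1)\Leftrightarrow(2)$ and $(1)\Leftrightarrow(3)$ separately.

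For $(1)\Leftrightarrow(2)$: having a compact representative is equivalent to $X=\mathrm{supp}(\mu)$ being compact, since the support of a probability measure on a compact space is closed, hence compact, and conversely $(\mathrm{supp}(\mu),r,\mu)$ represents $\smallx$. Because $(X,r)$ is complete, compactness of $X$ is equivalent to total boundedness, and $(X,r)$ is totally bounded iff it contains no infinite $\varepsilon$-separated set for any $\varepsilon>0$: a finite $\varepsilon$-net yields, by pigeonhole, a uniform bound on the size of $\varepsilon$-separated sets, while a maximal (hence finite) $\varepsilon$-separated set is a finite $\varepsilon$-net. Thus $(1)\Leftrightarrow(2)$.

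For $(1)\Rightarrow(3)$: fix $\varepsilon>0$; by compactness of $X$ pick a finite set $y_1,\dots,y_k\in X$ with $X=\bigcup_i B_{\varepsilon/3}(y_i)$, and note $\mu(B_{\varepsilon/3}(y_i))>0$ since $y_i\in\mathrm{supp}(\mu)$. Put $\delta:=\tfrac12\min_i\mu(B_{\varepsilon/3}(y_i))>0$. For every $x\in X$, choosing $i$ with $x\in B_{\varepsilon/3}(y_i)$, the triangle inequality gives $B_{\varepsilon/3}(y_i)\subseteq B_\varepsilon(x)$, so $\mu(B_\varepsilon(x))\ge 2\delta>\delta$. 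Hence $\mu\{x:\mu(B_\varepsilon(x))\le\delta\}=0$, i.e.\ $\widetilde v_\delta(\smallx)\le\varepsilon$, which is $(3)$.

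For $(3)\Rightarrow(2)$: fix $\varepsilon>0$ and apply $(3)$ with a sufficiently small radius, say $\varepsilon/5$, to get $\delta>0$ with $\widetilde v_\delta(\smallx)\le\varepsilon/5$; since $\varepsilon'\mapsto\mu\{x:\mu(B_{\varepsilon'}(x))\le\delta\}$ is non-increasing, this yields $\mu\{x:\mu(B_{\varepsilon/4}(x))\le\delta\}=0$. The delicate point — which I expect to be the main obstacle — is to upgrade this $\mu$-almost-everywhere lower bound to one valid at \emph{every} point of the support, since individual points are typically $\mu$-null. Here I would argue: for $z\in X=\mathrm{supp}(\mu)$ the ball $B_{\varepsilon/4}(z)$ has positive $\mu$-measure (it contains an open neighbourhood of $z$), hence meets the full-measure set $\{x:\mu(B_{\varepsilon/4}(x))>\delta\}$ in some point $x$; then $B_{\varepsilon/4}(x)\subseteq B_{\varepsilon/2}(z)$, so $\mu(B_{\varepsilon/2}(z))>\delta$ for every $z\in X$. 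Finally, if $z_1,\dots,z_m\in X$ are pairwise at distance $>\varepsilon$, the balls $B_{\varepsilon/2}(z_i)$ are pairwise disjoint, whence $1=\mu(X)\ge\sum_{i=1}^m\mu(B_{\varepsilon/2}(z_i))>m\delta$; so every $\varepsilon$-separated subset of $X$ has at most $1/\delta$ elements and $\xi_\varepsilon(\smallx)<\infty$. Together with the previous steps this closes the chain of equivalences.
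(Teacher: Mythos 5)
Your proof is correct and follows essentially the same route as the paper: identify $\xi_\varepsilon(\smallx)$ with the maximal $\varepsilon$-separated set in $\mathrm{supp}(\mu)$, use completeness plus total boundedness for compactness, and translate $\widetilde v_\delta(\smallx)\leq\varepsilon$ into the ball-mass condition $\mu\{x:\mu(B_\varepsilon(x))\leq\delta\}=0$, only arranging the implications slightly differently ($(1)\Leftrightarrow(2)$, $(1)\Rightarrow(3)$, $(3)\Rightarrow(2)$ instead of the paper's cycle). In fact you are more careful than the paper at two points: the factor $\tfrac12$ in the choice of $\delta$ avoids the non-strict inequality issue in the paper's step $(2)\Rightarrow(3)$, and your argument upgrading the $\mu$-a.e.\ bound $\mu(B_{\varepsilon/4}(x))>\delta$ to a bound at every point of the support (via a nearby full-measure point and a slightly larger ball) fills in a step the paper asserts without comment in $(3)\Rightarrow(1)$.
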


\noindent
The following characterization of random, almost surely compact
mm-spaces is immediate.

\begin{corollary}[Random compact mm-spaces]\label{cor:compact}
  Let \label{cor:cp} $\mathcal X$ be a random variable taking values
  in $\mathbb M$. The following conditions are equivalent.
  \begin{enumerate}
  \item The mm-space $\mathcal X$ is compact, almost surely, i.e.\
    $\mathbf P(\mathcal X\in\mathbb M_c)=1$.
  \item For all $\varepsilon>0$, it holds that $\mathbf
    P(\xi_\varepsilon(\mathcal X)<\infty)=1$.
  \item For all $\varepsilon>0$, there is a random variable $\Delta>0$
    with $\mathbf P(\widetilde v_\Delta(\mathcal
    X)\leq\varepsilon)=1$.
  \end{enumerate}
\end{corollary}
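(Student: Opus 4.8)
The plan is to obtain each of the three implications by applying the deterministic equivalences of Theorem~\ref{T:compact} pathwise; the only point requiring care is the passage from the uncountable family of conditions ``for all $\varepsilon>0$'' to countable families of events. First I would record the two monotonicity facts that make this passage work: $\varepsilon\mapsto\xi_\varepsilon(\smallx)$ is non-increasing and $\delta\mapsto\widetilde v_\delta(\smallx)$ is non-decreasing, both immediate from the definitions or from the representative formulas of Example~\ref{rem:repres}. Consequently ``$\xi_\varepsilon(\smallx)<\infty$ for all $\varepsilon>0$'' is equivalent to ``$\xi_{1/n}(\smallx)<\infty$ for all $n\in\mathbb N$'', and ``for every $\varepsilon>0$ there is $\delta>0$ with $\widetilde v_\delta(\smallx)\leq\varepsilon$'' is equivalent to ``for every $n\in\mathbb N$ there is $m\in\mathbb N$ with $\widetilde v_{1/m}(\smallx)\leq 1/n$''. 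I would also note that, since $\smallx\mapsto\nu^\smallx$ is continuous, both $\smallx\mapsto\xi_\varepsilon(\smallx)$ and $\smallx\mapsto\widetilde v_\delta(\smallx)$ are Borel measurable --- for $\xi_\varepsilon$ from the formula of Lemma~\ref{l4} together with lower semicontinuity of $\nu\mapsto\nu(U)$ on open sets, for $\widetilde v_\delta$ from $\{\widetilde v_\delta<\varepsilon\}=\bigcup_{q\in\mathbb Q\cap(0,\varepsilon)}\{\smallx:\nu^\smallx\{\underline{\underline r}:s_q(\underline{\underline r})\leq\delta\}=0\}$ --- so that in particular $\mathbb M_c=\bigcap_{n}\{\smallx:\xi_{1/n}(\smallx)<\infty\}$ is Borel and all events occurring in the statement make sense.

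For $(1)\Leftrightarrow(2)$ I would argue as follows. Theorem~\ref{T:compact} gives the pathwise inclusion $\{\mathcal X\in\mathbb M_c\}\subseteq\{\xi_\varepsilon(\mathcal X)<\infty\}$ for each fixed $\varepsilon>0$, so $(1)$ forces $\mathbf P(\xi_\varepsilon(\mathcal X)<\infty)=1$, which is $(2)$. Conversely, if $(2)$ holds then $A:=\bigcap_{n\in\mathbb N}\{\xi_{1/n}(\mathcal X)<\infty\}$ is a countable intersection of probability-one events, hence $\mathbf P(A)=1$; on $A$ the monotonicity reduction gives $\xi_\varepsilon(\mathcal X)<\infty$ for all $\varepsilon>0$, so Theorem~\ref{T:compact} yields $\mathcal X\in\mathbb M_c$ on $A$, which is $(1)$.

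For $(1)\Leftrightarrow(3)$ the same scheme applies, but the witnessing constant in condition~(3) of Theorem~\ref{T:compact} must be packaged into a random variable. Assuming $(3)$: fix $n\in\mathbb N$ and let $\Delta$ be the random variable supplied for $\varepsilon=1/n$; on the probability-one event $\{\Delta>0\}\cap\{\widetilde v_\Delta(\mathcal X)\leq 1/n\}$ the value $\delta:=\Delta$ witnesses condition~(3) of Theorem~\ref{T:compact} at level $1/n$, and intersecting over $n$ and invoking the monotonicity reduction shows that almost surely, for every $\varepsilon>0$ there is $\delta>0$ with $\widetilde v_\delta(\mathcal X)\leq\varepsilon$; hence $\mathcal X\in\mathbb M_c$ almost surely. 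Assuming $(1)$: fix $\varepsilon>0$. On $\{\mathcal X\in\mathbb M_c\}$ the set $\{k\in\mathbb N:\widetilde v_{1/k}(\mathcal X)\leq\varepsilon\}$ is non-empty, by Theorem~\ref{T:compact} and the monotonicity of $\widetilde v$ in $\delta$; so I would set $m:=\min\{k:\widetilde v_{1/k}(\mathcal X)\leq\varepsilon\}$ there and $\Delta:=1/m$, extended by $\Delta:=1$ on the null event $\{\mathcal X\notin\mathbb M_c\}$. Then $\Delta>0$; $\Delta$ is measurable because $\{m=k\}=\{\widetilde v_{1/k}(\mathcal X)\leq\varepsilon\}\cap\{\widetilde v_{1/(k-1)}(\mathcal X)>\varepsilon\}$ for $k\geq 2$ and $\{m=1\}=\{\widetilde v_1(\mathcal X)\leq\varepsilon\}$; and $\widetilde v_\Delta(\mathcal X)\leq\varepsilon$ on $\{\mathcal X\in\mathbb M_c\}$, i.e.\ $\mathbf P(\widetilde v_\Delta(\mathcal X)\leq\varepsilon)=1$, which is $(3)$.

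I expect the only genuine (and minor) obstacle to be this measurability bookkeeping --- making sure that $\xi_\varepsilon(\mathcal X)$, $\widetilde v_\delta(\mathcal X)$ and the constructed $\Delta$ really are random variables --- because, once Theorem~\ref{T:compact} is available, the probabilistic substance reduces entirely to the trivial fact that a countable intersection of almost sure events is almost sure.
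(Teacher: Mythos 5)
Your argument is correct and is exactly the intended one: the paper offers no proof at all (the corollary is declared ``immediate'' from Theorem~\ref{T:compact}), and your pathwise application of the deterministic equivalences, reduced to countably many events via the monotonicity of $\varepsilon\mapsto\xi_\varepsilon$ and $\delta\mapsto\widetilde v_\delta$, together with the measurability and random-$\Delta$ bookkeeping, is precisely what makes that ``immediate'' rigorous. Nothing in your write-up conflicts with the paper; it simply supplies the details the authors left implicit.
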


\begin{remark}[Size of $\varepsilon$-separated set and size of
  $\varepsilon$-covering]
  The following observation will be used in the proof of
  Theorem~\ref{T:compact}: Let $(X,r)$ be a metric space and
  $\varepsilon>0$, let $\xi_\varepsilon$ be the maximal size of an
  $\varepsilon$-separated set 
  and $N_\varepsilon$ be the minimal number of $\varepsilon$-balls
  needed to cover $(X,r)$. Then
  $$ N_{\varepsilon} \leq \xi_\varepsilon\leq N_{\varepsilon/2}.$$

  \noindent
  In order to see this, let $x_1,...,x_{\xi_\varepsilon}$ be a maximal
  $\varepsilon$-separated set. Then, $X =
  \bigcup_{i=1}^{\xi_\varepsilon} B_\varepsilon(x_i)$, since
  otherwise, we find $x \in
  X\setminus\Big(\bigcup_{i=1}^{\xi_\varepsilon}
  B_\varepsilon(x_i)\Big)$ and hence, the set is not maximal. This
  shows $N_\varepsilon\leq \xi_\varepsilon$. For the second
  inequality, it is clear that
  $B_{\varepsilon/2}(x_1),...,B_{\varepsilon/2}(x_{\xi_\varepsilon})$
  are disjoint. Hence, any set of centers of $\varepsilon/2$-balls
  which cover $(X,r)$ must hit each $B_{\varepsilon/2}(x_i)$ at least
  once. As a consequence, $\xi_\varepsilon\leq N_{\varepsilon/2}$.
\end{remark}

\begin{proof}[Proof of Theorem~\ref{T:compact}] Let $\smallx =
  \overline{(X,r,\mu)}$. We use the notation laid out in
  Remark~\ref{rem:repres}. In particular, recall \eqref{eq:repres1}.

  $(1)\Rightarrow (2)$: Let $\smallx$ be compact and $\varepsilon >
  0$. Then $(X,r)$ is totally bounded and there is $N_{\varepsilon/2}
  \in \mathbb{N}$ such that $(X,r)$ can be covered by
  $N_{\varepsilon/2}$ balls of radius $\varepsilon/2$. Then we find
  $\xi_\varepsilon(\smallx) \leq N_{\varepsilon/2} <\infty$ by the
  last remark.

  $(2)\Rightarrow (3)$: Let $\varepsilon>0$. The space $(X,r)$ can be
  covered by $\xi_{\varepsilon/2}(\smallx)<\infty$ balls of radius
  $\varepsilon/2$, again by the last remark. Let $x_1, \dots ,
  x_{\xi_{\varepsilon/2}}$ be centers of such balls and $\delta :=
  \min \{ \mu(B_{\varepsilon/2} (x_i)) : \mu(B_{\varepsilon/2} (x_i))
  > 0\}$. Then $\delta > 0$. Now take any $x \in X$ and choose $i \in
  \{1, \dots , \xi_{\varepsilon/2} \}$ such that $x \in
  B_{\varepsilon/2} (x_i)$. Then we have
  \begin{align*}
    \mu(B_{\varepsilon} (x)) \geq \mu(B_{\varepsilon/2} (x_i)) \geq
    \delta.
  \end{align*}
  Hence,
  $$ \nu^{\smallx}\{\underline{\underline r}:
  s_{\varepsilon}(\underline{\underline r})\leq\delta\} = \mu\{x\in X:
  \mu(B_{\varepsilon}(x))\leq \delta\} = 0.  $$

  \noindent
  $(3)\Rightarrow (1)$: It suffices to show that $(X,r)$ is totally
  bounded. Let $\varepsilon > 0$. By assumption, there is $\delta > 0$
  such that $$\nu^\smallx\{\underline{\underline r}:
  s_\varepsilon(\underline{\underline r})\leq\delta\}=\mu\{x\in X:
  \mu(B_\varepsilon(x))\leq \delta\} = 0.$$ We show that there is a
  finite maximal $2\varepsilon$-separated set in $X$. For this, take a
  maximal $2\varepsilon$-separated set $S\subseteq X$ (and without
  loss of generality assume that supp$(\mu) = X$). Then, using the
  last remark,
  \begin{align*}
    1 = \mu(X) = \mu\Big(\bigcup_{x\in S} B_{2\varepsilon} (x)\Big)
    \geq \mu\Big(\bigcup_{x\in S} B_\varepsilon (x)\Big) = \sum_{x\in
      S} \mu(B_\varepsilon (x)) \geq |S| \cdot \delta,
  \end{align*}
  since $\mu(B_\varepsilon (x))>\delta$ holds $\mu$-almost surely by
  assumption. Now, $|S|\leq 1/\delta<\infty$ and $\varepsilon>0$ was
  arbitrary, so $(X,r)$ is totally bounded.
\end{proof}

\noindent
Next, we come to a characterization of locally compact
mm-spaces. Again some notation is needed.

\begin{definition}[$\delta$-restriction] Let
  $\underline{\underline r} := (r_{ij})_{1\leq i<j} \in \mathbb
  R_+^{\binom{\mathbb N}{2}}$. Set $\widehat\tau_\delta(0):=1$
  and
  $$ \widehat\tau_\delta(i+1) := \inf\{j>\widehat\tau_\delta(i): r_{1j}\leq\delta\}.$$  
  Then, $$\tau_\delta(\underline{\underline r}) :=
  (r_{\widehat\tau_\delta(i),
    \widehat\tau_\delta(j)})_{1\leq i<j}$$ is called the
  \emph{$\delta$-restriction} of $\underline{\underline r}$.
\end{definition}

\begin{remark}[$\delta$-restriction for distance matrices.]
  \sloppy Let $\smallx = \overline{(X,r,\mu)} \in \mathbb M$ and
  $x_1,x_2,... \in X$. We note that $x_k \in
  \widehat\tau_\delta(\mathbb N)$ iff $r(x_1,x_k)\leq
  \delta$. Hence, $\tau_\delta((r(x_i,x_j))_{1\leq i\leq
    j})$ is the distance matrix distribution for points among
  $x_2,x_3,...$ which have distance at most $\delta$ to
  $x_1$. So,
  \begin{align*}
    (\tau_\delta)_\ast \nu^\smallx(\cdot) & =
    \nu^\smallx\{\tau_\delta(\underline{\underline r}) \in
    \cdot\} = \nu^\smallx\{\underline{\underline r} \in \cdot |
    r_{12}, r_{13},...\leq\delta\} \\ & = \mu^{\otimes \mathbb
      N}\{(r(x_i, x_j))_{1\leq i < j} \in \cdot | r(x_1,x_j)\leq
    \delta \text{ for all }j=2,3,...\}
  \end{align*}
  Clearly, $(\tau_\delta)_\ast \nu^\smallx$ is exchangeable,
  since $\nu^\smallx$ is exchangeable.
\end{remark}

\begin{theorem}[Locally compact mm-spaces]
  \label{T:loccompact}
  Let $\smallx \in\mathbb M$. The following conditions are equivalent.
  \begin{enumerate}
  \item The mm-space $\smallx$ is locally compact, $\smallx\in\mathbb M_{lc}$.
  \item It holds that
    $$ \nu^\smallx\Big(\bigcap_{0<\eta<\delta} 
    \Big\{\underline{\underline r}: \xi_\eta(
    \tau_\delta(\underline{\underline
      r}))<\infty\Big\}\Big)\xrightarrow{\delta\to 0}1.$$
  \end{enumerate}
\end{theorem}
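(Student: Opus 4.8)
The plan is to translate local compactness of $(X,r)$ into a statement about the $\delta$-restriction of the distance matrix, using the same "separated-set vs.\ covering" dictionary already established for Theorem~\ref{T:compact}. Fix a representative $\smallx = \overline{(X,r,\mu)}$ with $\mathrm{supp}(\mu)=X$. The key observation is that, for $\mu$-almost every $x_1\in X$, the closed ball $B_\delta(x_1)$ carries positive $\mu$-mass, and conditionally on drawing $x_2,x_3,\dots$ i.i.d.\ from $\mu$ and keeping only those landing in $B_\delta(x_1)$, the resulting points are i.i.d.\ according to the normalized restriction $\mu(\,\cdot\,\cap B_\delta(x_1))/\mu(B_\delta(x_1))$. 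Hence $\tau_\delta(\underline{\underline r})$ is (for $\mu^{\otimes\mathbb N}$-a.e.\ sequence) the distance matrix distribution of the mm-space $B_\delta(x_1)$ equipped with that normalized measure, and by Lemma~\ref{l4} the quantity $\xi_\eta(\tau_\delta(\underline{\underline r}))$ is $\mu^{\otimes\mathbb N}$-a.s.\ equal to the maximal size of an $\eta$-separated subset \emph{of} $B_\delta(x_1)$. Therefore
\begin{align*}
  \bigcap_{0<\eta<\delta}\big\{\underline{\underline r}:\xi_\eta(\tau_\delta(\underline{\underline r}))<\infty\big\}
\end{align*}
has $\nu^\smallx$-measure equal to $\mu\{x_1: B_\delta(x_1)\text{ is totally bounded}\}$, because a metric space (here the closed, hence complete, ball $B_\delta(x_1)$) is compact iff it is totally bounded iff it has finite maximal $\eta$-separated set for every $\eta>0$. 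So condition~(2) is equivalent to $\mu\{x_1:B_\delta(x_1)\text{ is compact}\}\xrightarrow{\delta\to 0}1$.

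It then remains to show that this last condition is equivalent to local compactness of $\smallx$. For $(1)\Rightarrow(2)$: if $(X,r)$ is locally compact, then for every $x\in X$ there is $\delta(x)>0$ with $B_{\delta(x)}(x)$ compact (closed balls of small enough radius inside a compact neighbourhood are compact). The sets $A_\delta:=\{x: B_\delta(x)\text{ compact}\}$ increase to $X$ as $\delta\downarrow 0$ along a sequence (note $A_\delta\subseteq A_{\delta'}$ for $\delta'\le\delta$ since a closed subset of a compact set is compact), so $\mu(A_\delta)\to 1$ by continuity of measure. For $(2)\Rightarrow(1)$: choose a sequence $\delta_n\downarrow 0$ with $\mu(A_{\delta_n})\ge 1-2^{-n}$; then $A^\ast:=\bigcup_n\bigcap_{m\ge n}A_{\delta_m}$ has full $\mu$-measure, and every $x\in A^\ast$ lies in $A_{\delta_m}$ for some $m$, hence has a compact neighbourhood. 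Since $\mathrm{supp}(\mu)=X$ and $A^\ast$ is dense, one still has to upgrade "locally compact on a dense set" to "locally compact everywhere": here one uses that if $x\in X$ is arbitrary, pick $x'\in A^\ast$ with $r(x,x')<\delta_m/2$ where $B_{\delta_m}(x')$ is compact; then $B_{\delta_m/2}(x)\subseteq B_{\delta_m}(x')$ is a closed subset of a compact set, hence compact, so $x$ too has a compact neighbourhood.

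The main obstacle is the measure-theoretic step underlying the first paragraph: one must verify carefully that conditioning the i.i.d.\ sequence $(x_j)_{j\ge 2}$ on the event $\{r(x_1,x_j)\le\delta\text{ for all }j\}$ — which has probability zero — is correctly realized by the $\delta$-restriction operator $\widehat\tau_\delta$, i.e.\ that the subsequence of points falling in $B_\delta(x_1)$ is genuinely i.i.d.\ from the normalized restricted measure, and that this holds for $\mu$-a.e.\ choice of $x_1$ (in particular for $x_1$ with $\mu(B_\delta(x_1))>0$, which is $\mu$-a.e.\ $x_1$ since the set of $x_1$ with $\mu(B_\delta(x_1))=0$ is $\mu$-null). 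This is exactly the content of the Remark on "$\delta$-restriction for distance matrices" preceding the theorem, so the identification $(\tau_\delta)_\ast\nu^\smallx = \nu^{B_\delta(x_1)}$ (integrated over $x_1\sim\mu$) can be quoted; the only real work is checking that $\xi_\eta$ reads off the separated-set size on this conditioned object, which is Lemma~\ref{l4} applied fibrewise, together with a monotone-class / Fubini argument to push the a.s.\ statement through the $x_1$-integral. Everything else is the elementary point-set topology of total boundedness and closed subsets of compact sets, already used in the proof of Theorem~\ref{T:compact}.
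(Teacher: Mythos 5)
Your first paragraph and your direction $(1)\Rightarrow(2)$ follow essentially the same route as the paper: the paper, too, identifies $(\tau_\delta)_\ast\nu^{\smallx}$ with the law of the distance matrix of points conditioned to lie in $B_\delta(x_1)$, reads off the maximal $\eta$-separated set of that ball via $\xi_\eta\circ\tau_\delta$ (Lemma~\ref{l4} fibrewise), and thereby rewrites condition (2) as $\mu\{x:B_\delta(x)\text{ totally bounded}\}\xrightarrow{\delta\to0}1$. Up to the same small glosses the paper allows itself (measurability of $\{x:B_\delta(x)\text{ totally bounded}\}$, the distinction between $B_\delta(x_1)$ and $\mathrm{supp}(\mu|_{B_\delta(x_1)})$ at the boundary), this part is fine.

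The genuine gap is the last step of your $(2)\Rightarrow(1)$, the upgrade from ``$\mu$-a.e.\ point has a compact ball'' to ``every point of $X=\mathrm{supp}(\mu)$ has a compact neighbourhood''. As written it has a quantifier problem: you need a single $m$ with both $x'\in A_{\delta_m}$ and $r(x,x')<\delta_m/2$, but points $x'\in A^\ast$ near $x$ are only guaranteed to lie in $A_{\delta_m}$ for all sufficiently \emph{small} $\delta_m$, and these admissible radii may shrink faster than $r(x,x')$ as $x'\to x$. Worse, the step cannot be repaired: glue countably many unit segments at a single point $o$ (path metric) and let $\mu=\sum_n 2^{-n}\cdot(\text{uniform measure on the }n\text{-th segment})$. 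Every $x\neq o$ has a compact ball of any radius $\leq r(x,o)$, so $\mu(A_\delta)\to1$ and condition (2) holds, yet $o\in\mathrm{supp}(\mu)$ has no compact neighbourhood. Thus condition (2) is equivalent only to the $\mu$-a.e.\ statement, not to local compactness of the support at every point. The paper does not solve this either --- its proof simply opens with ``$\smallx$ is locally compact iff for $\mu$-almost all $x$ there is $\delta>0$ such that $B_\delta(x)$ can be covered by finitely many $\eta$-balls for all $0<\eta<\delta$'', i.e.\ it reads local compactness in the $\mu$-a.e.\ sense and never attempts your upgrade; under the literal definition of $\mathbb M_{lc}$ the example above is equally problematic for that direction. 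So the place where your argument breaks is exactly the point the paper passes over silently; only the implication $(1)\Rightarrow(2)$, which is the one actually used in the proof of Theorem~\ref{T1}, holds without this caveat.
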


\begin{proof}
  Let $\smallx = \overline{(X,r,\mu)}$. Then, $\smallx$ is locally
  compact iff for $\mu$-almost all $x\in X$ there is $\delta>0$, such
  that the ball $B_\delta(x)$ can be covered by a finite number of
  balls with radius $\eta$, for all $0<\eta<\delta$. Hence, 
  \begin{align*}
    1 
    & = \mu\Big(\bigcup_{\delta>0}
    \bigcap_{0<\eta<\delta}\big\{x: B_\varepsilon(x) \text{ can
      be covered by finitely many balls of radius }\eta\big\}\Big) \\
    & = \lim_{\delta\to 0} \mu\Big( \bigcap_{0<\eta<\delta}\big\{x:
    \text{ the maximal $\eta$-separated set in $B_\delta(x)$ is finite}\big\}\Big)\\
    & = \lim_{\delta\to 0} \mu^{\otimes\mathbb N}\Big(
    \bigcap_{0<\eta<\delta}\big\{(x_1,x_2,...):
    \xi_\eta((r_{x_i,x_j})_{2\leq i<j})<\infty | r(x_1,x_2), r(x_1,
    x_3),...<\delta\big\}\Big)
    \\
    & = \lim_{\delta\to 0} \mu^{\otimes\mathbb N}\Big(
    \bigcap_{0<\eta<\delta}\big\{(x_1,x_2,...):
    \xi_\eta(\tau_\delta((r_{x_i,x_j})_{1\leq i<j}))<\infty\}\Big) \\
    & = \lim_{\delta\to 0} \nu^\smallx\Big(\bigcap_{0<\eta<\delta}
    \Big\{\underline{\underline r}: \xi_\eta(
    \tau_\delta(\underline{\underline r}))<\infty\Big\}\Big).
  \end{align*}
\end{proof}

\section{$\Lambda$-coalescents}
\label{S:lambda}
\noindent
We come to the application of the general results from the last
section to metric spaces which arise in the context of coalescents
which allow for multiple mergers. The proof of Theorem~\ref{T1} is
given in the next section.  Introduced by \cite{Pit1999},
$\Lambda$-coalescents are usually described by Markov processes taking
values in partitions of $\mathbb N$, which become coarser as time
evolves, almost surely, and are exchangeable. More exactly, we define
$(\Pi_t)_{t\geq 0} = (\Pi^\Lambda_t)_{t\geq 0}$, starting in the
trivial partition of $\mathbb N$. For a finite measure $\Lambda$ on
$[0,1]$, set
\begin{align}
  \label{eq:lambdabk}
  \lambda_{b,k} = \int_0^1 x^{k-2}(1-x)^{b-k} \Lambda(dx).
\end{align}
Among any set of $b$ partition elements in $\Pi_t$, each subset of
size $k$ merges to one partition element at rate $\lambda_{b,k}$. It
is easy to check that such a process is well-defined (i.e.\ the
$\lambda_{b,k}$'s are consistent) and leads to an exchangeable
partition of $\mathbb N$ for all $t\geq 0$. In our analysis we
restrict ourselves to measures $\Lambda$ which do not have an atom at
1; see Example~20 in \cite{Pit1999} for a discussion of this case.

One intuitive way to construct a $\Lambda$-coalescent (given $\Lambda$
has no atom at 0) is as follows: consider a Poisson-process with
intensity measure $\frac{\Lambda(dx)}{x^2} \cdot dt$ on $[0,1] \times
\mathbb R_+$. At any Poisson point $(x,t)$, mark all partition
elements, which are available by time $t$ with probability $x$ and
merge all marked partition elements.

\bigskip

The set of $\Lambda$-coalescents falls into (at least) three
classes. The class of $\Lambda$-coalescents coming down from infinity
(see Property 1 in Theorem~\ref{T1}), the larger class of processes
having the \emph{dust-free}-property, i.e.\ $\{f(\Pi_t^1)>0$ for all
$t>0$, almost surely, where $f(\Pi_t^j)$ is the frequency of the
partition element containing $j$ at time $t$, $j\in\mathbb N$). All
other $\Lambda$-coalescents contain \emph{dust}, which is a positive
frequency of natural numbers forming their own partition element.

Starting with \cite{Schweinsberg2000}, sharp conditions for a
$\Lambda$-coalescents coming down from infinity have been
given. Precisely, it was stated that a $\Lambda$-coalescent comes down
from infinity iff
\begin{align} \label{EquaGamma<Infty} \sum_{b=2}^{\infty}
  \Big(\sum_{k=2}^{b} k \binom{b}{k} \lambda_{b,k}\Big)^{-1} < \infty.
\end{align}
It has been shown by \cite{BertoinLeGall2006} that this is equivalent
to
\begin{align*}
\int_t^\infty \psi(q)^{-1} dq < \infty.
\end{align*}
for some $t>0$ where
\begin{align*}
\psi(q) = \int_0^1 (e^{-qx} - 1 + qx) x^{-2} \Lambda(dx).
\end{align*}
The larger class of coalescents having the dust-free property is
characterized by the requirement that
\begin{align}\label{eq:df}
  \int_0^1 x^{-1} \Lambda(dx) = \infty,
\end{align}
see Theorem 8 in \cite{Pit1999}.

\bigskip

\noindent
Let $\Pi^\Lambda := \Pi = (\Pi_t : t \geq 0)$ be the
$\Lambda$-coalescent. Then for almost all sample paths of
$\Pi^\Lambda$, there is a metric $r^\Pi$ on $\mathbb{N}$, associated
to $\Pi$, defined by
\begin{align*}
  r^\Pi(i,j) := \inf \{t \geq 0 : \text{$i,j$ in the same partition
    element of $\Pi_t$}\},
\end{align*}
that is the time needed for $i$ and $j$ to coalesce. We denote by
$(L^\Pi,r^\Pi)$ the completion of $(\mathbb{N},r^\Pi)$.  In order to
equip $(L^\Pi,r^\Pi)$ with a probability measure, we use a limit
procedure. Set
$$ H^n(\Pi) := \overline{\Big(L^\Pi, r^\Pi, \tfrac 1n \sum_{i=1}^n \delta_i\Big)}$$
Then, the family of $\mathbb M$-valued random variables
$(H^n(\Pi))_{n=1,2,...}$ converges in distribution with respect to the
Gromov-weak topology iff $\Pi^\Lambda$ is dust-free, i.e.\
\eqref{eq:df} holds (see Theorem 5 in \citealp{GPWmetric09}). Since
coalescent processes are associated with tree-like structures, we call
the limiting mm-space $\mathcal L = \overline{(L^\Pi, r^\Pi,
  \mu^\Pi)}$ the \emph{$\Lambda$-coalescent measure tree}.

\section{Proof of Theorem \ref{T1}}
\label{S:4} Let $N(t) := \#\Pi_t$ denote the number of blocks in the
partition $\Pi_t$ and note that $\xi_{\varepsilon}(\mathcal L) \leq
N(\varepsilon)$ where $\xi_{\varepsilon}(\mathcal L) < N(\varepsilon)$
is only possible if there are partition elements in $\Pi_\varepsilon$
which carry no mass in $\mathcal L$.

$(1)\Rightarrow (2)$: Using Corollary~\ref{cor:cp}, we must show that
for all $\varepsilon>0$, we have $\xi_{\varepsilon}(\mathcal
L)<\infty$ almost surely. This follows directly from the fact that
$\xi_{\varepsilon}(\mathcal L) \leq N(\varepsilon)$ and the assumption
that $\Pi$ comes down from infinity.

$(2)\Rightarrow (1)$: The proof is by contradiction. Assume
$\mathcal{L}$ is compact and $\Pi$ stays infinite for some time
$\varepsilon>0$. Since $\Pi_\varepsilon$ contains no dust, we have
that $f((\Pi_\varepsilon^j))>0$ for all $j=1,2,...$, almost
surely. Since there are infinitely many lines up to time
$\varepsilon$, we find partition elements of arbitrarily small mass.
This implies that $\nu^{\mathcal L}\{\underline{\underline r}:
s_\varepsilon(\underline{\underline r}) \leq \delta\}>0$ almost
surely, for all $\delta>0$. On the other hand, since $\mathcal L$ is
compact, there is a random variable $\Delta>0$ such that
$\nu^{\mathcal L}\{\underline{\underline r}:
s_\varepsilon(\underline{\underline r}) \leq \Delta\}=0$, almost
surely by Corollary~\ref{cor:compact}. In particular, there is
$\delta>0$ such that $$\nu^{\mathcal L}\{\underline{\underline r}:
s_\varepsilon(\underline{\underline r}) \leq \delta\}=0$$ with
positive probability, which gives a contradiction.

Last, assume that $\mathcal L$ does not come down from infinity and
recall that $\Lambda$ cannot have an atom at $0$ in this case. It has
been shown in Proposition~23 of \cite{Pit1999} that the total
coalescence rate of all lines is infinite for all times, almost
surely. This is easy to see from the construction of
$\Lambda$-coalescence using the Poisson process with intensity
$\Lambda(dx)/x^2$, since the total coalescence rate of the partition
element containing 1, given that there are infinitely many lines, is
\begin{align*}
  \int_0^1 x \frac{\Lambda(dx)}{x^2} = \int_0^1 x^{-1} \Lambda(dx) =
  \infty,
\end{align*}
since the dust-free property, \eqref{eq:df}, holds by assumption.

Let $0<\eta<\delta$ and consider the $\delta$-ball around $1$ in
$L^\Pi$. Since the coalescence rate is infinite and an infinite number
of lines coalesce to the line containing 1 between times $\eta$ and
$\delta$, there is an infinite $\eta$-separated set in
$B_\delta(\{1\})$. Hence,
\begin{align*}
  \nu^{\mathcal L}\{\underline{\underline r}:
  \xi_\eta(\tau_\delta(\underline{\underline r}))<\infty\}=0,
\end{align*}
almost surely. Hence, for any sequences $0<\eta_n<\delta_n$ with
$\delta_n\xrightarrow{n\to\infty}0$, we find that
\begin{align*}
  \nu^{\mathcal L}(\bigcap_{0<\eta<\delta_n} \{\underline{\underline
    r}: \xi_\eta(\tau_{\delta_n}(\underline{\underline r}))<\infty\}
  \leq \nu^{\mathcal L}(\{\underline{\underline r}:
  \xi_{\eta_n}(\tau_{\delta_n}(\underline{\underline r}))<\infty\} =0,
\end{align*}
almost surely. By Theorem~\ref{T:loccompact}, $\mathcal L$ cannot be
locally compact.


\end{document}